\newtheorem{Theorem}{Theorem}[section]
\newtheorem{Lemma}[Theorem]{Lemma}
\newtheorem{Proposition}[Theorem]{Proposition}
\newtheorem{proposition}[Theorem]{Proposition}
\newtheorem{theorem}{Theorem}[section]
\newtheorem{corollary}[Theorem]{Corollary}
\newtheorem{lemma}[Theorem]{Lemma}
\theoremstyle{definition}
\newtheorem*{definition*}{Definition}
\newtheorem{Definition}[Theorem]{Definition}
\newtheorem{Remark}[Theorem]{Remark}
\newtheorem{example}[Theorem]{Example}
\def \dist {\mathrm{dist}}
\def \R {\mathbb{R}}
\def \Rn {\mathbb{R}^n}
\def \T {\mathbb{T}}
\def \Z {\mathbb{Z}}
\def \e {\varepsilon}
\def \ep {\varepsilon}
\def \O {\Omega}
\def \G{\Gamma}
\def \grad {\nabla}
\def\Xint#1{\mathchoice
{\XXint\displaystyle\textstyle{#1}}%
{\XXint\textstyle\scriptstyle{#1}}%
{\XXint\scriptstyle\scriptscriptstyle{#1}}%
{\XXint\scriptscriptstyle\scriptscriptstyle{#1}}%
\!\int}
\def\XXint#1#2#3{{\setbox0=\hbox{$#1{#2#3}{\int}$ }
\vcenter{\hbox{$#2#3$ }}\kern-.6\wd0}}
\def\dashint{\Xint-}
\newcommand{\eref}[1]{(\ref{e.#1})}
\newcommand{\tref}[1]{Theorem \ref{t.#1}}
\newcommand{\lref}[1]{Lemma \ref{l.#1}}
\newcommand{\cref}[1]{Corollary \ref{c.#1}}
\newcommand{\fref}[1]{Figure \ref{f.#1}}
\newcommand{\sref}[1]{Section \ref{s.#1}}
\newcommand{\dref}[1]{Definition \ref{d.#1}}
\title[Quantitative convergence in oscillatory obstacle problem]{Quantitative convergence of the ``bulk'' free boundary in an oscillatory obstacle problem}
\author[F. Abedin and W. M. Feldman]{Farhan Abedin and William M. Feldman} 
\address{Department of Mathematics, Lafayette College, Easton, PA 18042}
\email{abedinf@lafayette.edu}
\address{Department of Mathematics, University of Utah, Salt Lake City, UT 84112}
\email{feldman@math.utah.edu}
\date{\today}
\begin{document}

\begin{abstract}
We consider an oscillatory obstacle problem where the coincidence set and free boundary are also highly oscillatory.  We establish a rate of convergence for a regularized notion of free boundary to the free boundary of a corresponding classical obstacle problem, assuming the latter is regular. The convergence rate is linear in the minimal length scale determined by the fine properties of a corrector function.
\end{abstract}

\maketitle

\section{Introduction}


Let $U \subset \Rn$ be a smooth, bounded domain, and let $\varphi_0 \in C^2(U) \cap C(\overline{U})$ be an obstacle that is positive somewhere in $U$, negative on $\partial U$, and satisfies the ellipticity condition
\begin{equation}\label{ellipticity}
\lambda \leq -\Delta \varphi_0 \leq \lambda^{-1}
\end{equation}
for some $1\geq\lambda > 0$. Consider the obstacle minimal supersolution above $\varphi_0$:
\begin{equation}\label{homogOP}
u_0(x) := \min\left\{ v :  \Delta v \leq 0 \text{ in } U, \ v \geq \varphi_0(x) \text{ in } U,  \ v \geq 0 \text{ on } \partial U \right\}.
\end{equation}
Then $u_0$ satisfies
\[ \min\{ \Delta u_0, u_0 - \varphi_0\} = 0.\]
The non-contact set of $u_0$ is $\O_0 := \{u_0 > \varphi_0\}\cap U$, the contact (or coincidence) set of $u_0$ is $\Lambda_0 := \{u_0 = \varphi_0\} \cap U$, and the free boundary is the set $\G_0:=\partial \{u_0 = \varphi_0\} \cap U$.  

In this work we study a natural toy model for the behavior of an elastic membrane resting on a rough surface. Let $\psi$ be $\Z^n$-periodic, $-1 \leq \psi \leq 0$, and let $p\in \R$ be a given exponent. For each $\e > 0$, define the rough obstacle
\[ \varphi_\ep(x) := \varphi_0(x) +\ep^p \psi(x/\ep).\] 
Consider the obstacle minimal supersolution
\begin{equation}\label{epsOP}
u_{\e}(x) = \min\left\{ v :  \Delta v \leq 0 \text{ in } U, \ v \geq \varphi_0(x) + \ep^p \psi(x/\ep) \text{ in } U,  \ v \geq 0 \text{ on } \partial U \right\}.
\end{equation}
which satisfies
\[ \min\{ \Delta u_\ep, u_\ep - \varphi_\ep\} = 0.\]
The non-contact set of $u_{\e}$ is $\O_{\e}:= \{u_{\e} > \varphi_{\e} \} \cap U$, the contact (or coincidence) set of $u_{\e}$ is $\Lambda_{\e} := \{u_{\e} = \varphi_{\e}\} \cap U$, and the free boundary is the set $\G_{\e}:=\partial \{u_{\e} = \varphi_{\e}\} \cap U$.

\begin{figure}[h]
  \includegraphics[width=.4\textwidth,height = .35\textwidth]{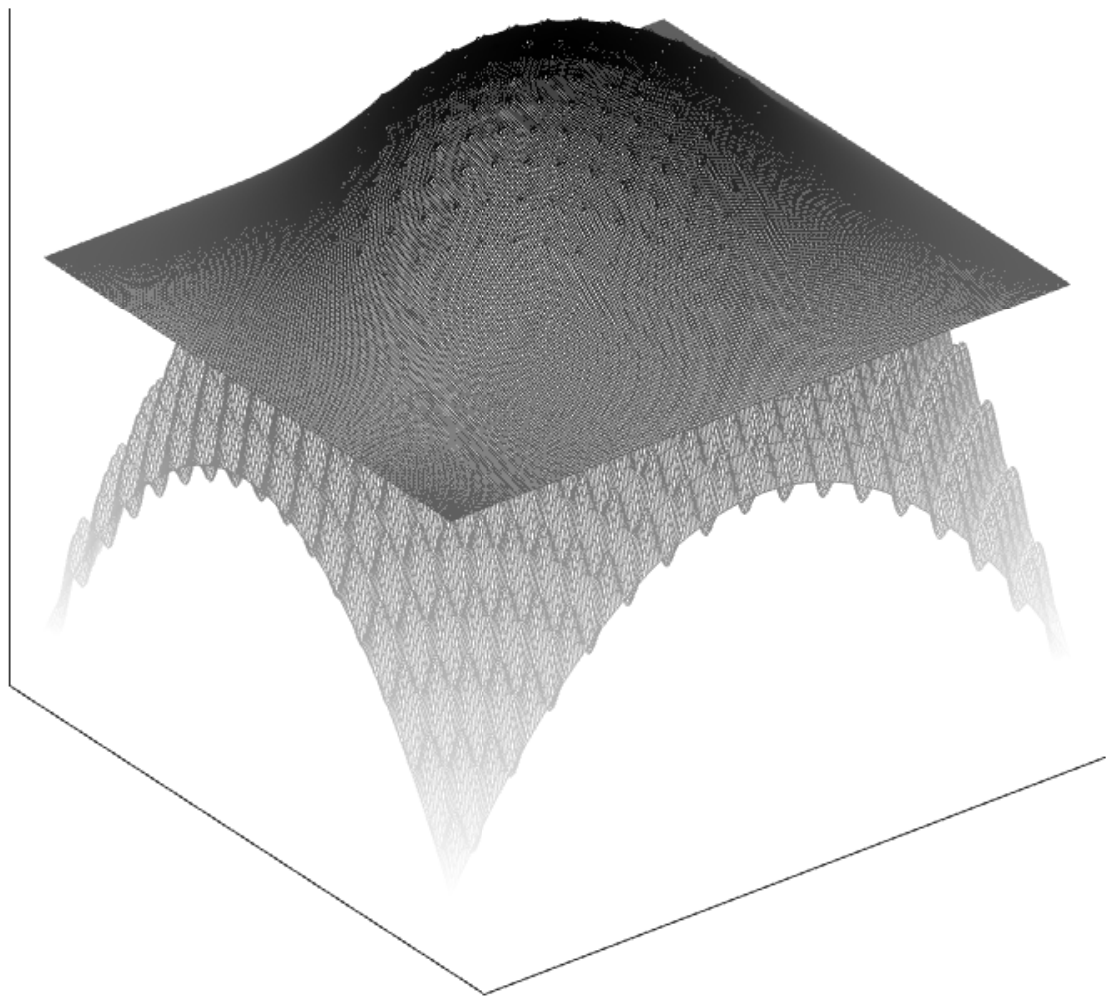}
  \hspace{.5in}
  \includegraphics[width=.4\textwidth,height = .35\textwidth]{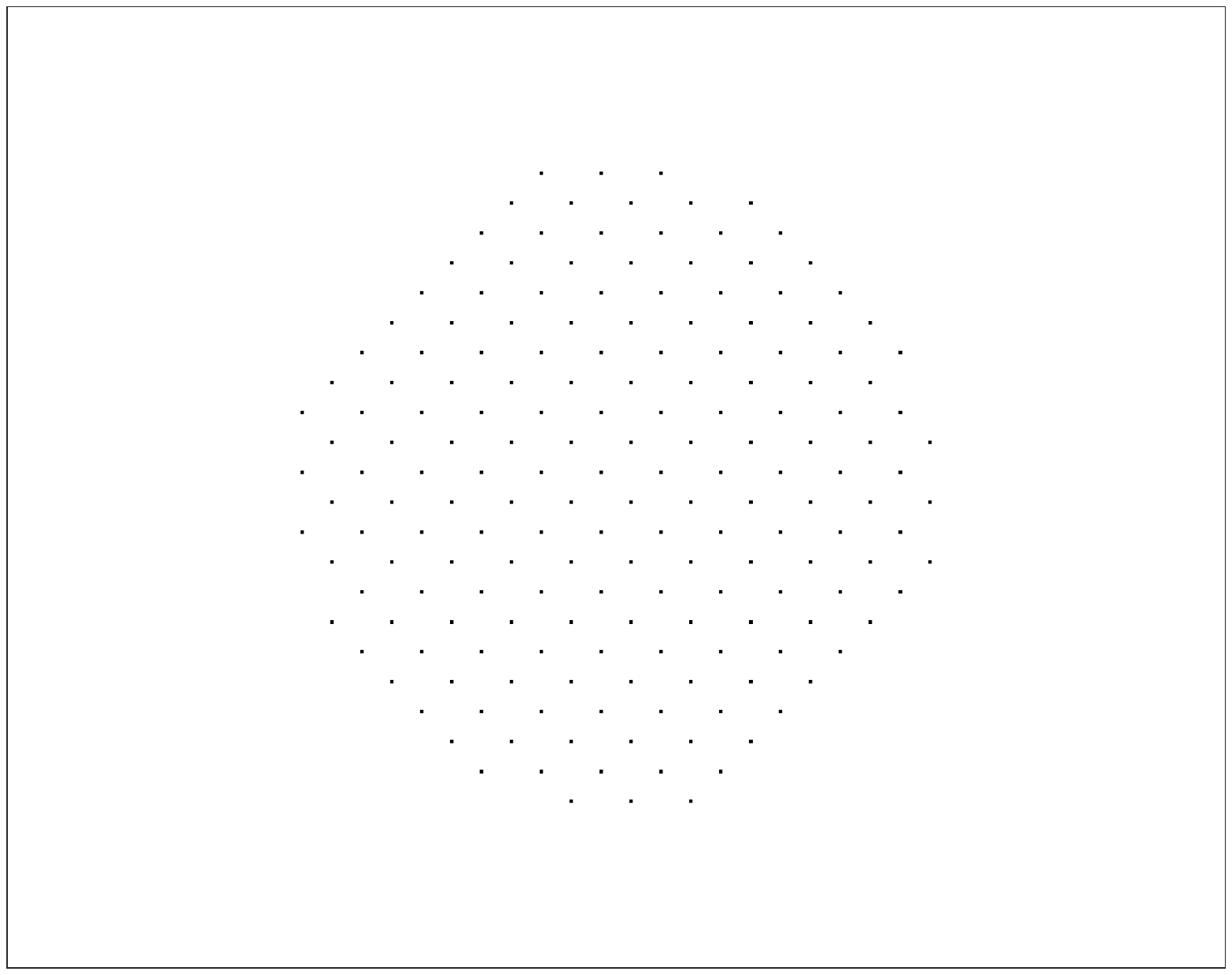}
  \caption{Left: simulation of obstacle problem solution above a parabolic obstacle perturbed by an oscillating sinusoid.  Right: contact set of the solution with the obstacle.}
  \label{f.obsfig}
\end{figure}

Our goal is to quantitatively compare the functions $u_{\e}$ and $u_0$, as well the contact sets $\Lambda_\ep$ and $\Lambda_0 $. 
Note that the obstacle $\varphi_{\e}$ and contact set $\Lambda_\ep$ may be highly oscillatory. Generally speaking, when $p < 2$ the obstacle solution $u_{\e}$ rests on the peaks of $\psi$ and the contact set is effectively ``discretized", see \fref{obsfig}. Furthermore, as illustrated by the example in \fref{obsfig}, one cannot expect the free boundary $\G_{\e}$ of the oscillatory obstacle problem to converge in Hausdorff distance to the free boundary $\G_0$ of the unperturbed obstacle problem.  

The purpose of this note is to show that certain analogues of the basic regularity theory for the classical obstacle problem can be developed for the oscillatory obstacle problem and used to define a notion of \emph{bulk contact set} $\tilde{\Lambda}_\ep$ and \emph{bulk free boundary} $\tilde{\G}_\ep$, which can be compared directly with the the effective contact set $\Lambda_0$ and $\G_0$.  The rate of convergence of $\tilde{\G}_\ep$ to $\G_0$ is determined by fine properties of a corrector-type function arising from an appropriate cell problem, which is studied in Section \ref{sec:corrector}.

Let us state our main result more precisely.  We first introduce the corrector function. For each $\mu > 0$, let $\chi_{\mu}$ be the $\Z^n$-periodic minimal supersolution of the problem
\begin{equation}\label{correctorproblem}
\chi_\mu := \min\left\{ v :  \Delta v \leq \mu \text{ in } \R^n, \ v \geq \psi(x) \text{ in } \R^n,  \ v \ \hbox{ is $\Z^n$-periodic} \right\}
\end{equation}
Since the zero function is a supersolution for each $\mu > 0$, we have $\psi \leq \chi_{\mu} \leq 0$ in $\Rn$. Define \begin{equation}\label{heightofchimu}
\mathcal{E}(\mu) = -\inf \chi_\mu = \|\chi_\mu\|_\infty.
\end{equation}
We show in Section \ref{sec:corrector} that $\mathcal{E}(\mu) \to 0$ as $\mu \to 0$ but the rate depends sensitively on the behavior of $\psi$ near its maxima.  In some cases this rate is linear in $\mu$, but it also may be H\"older or worse.

Next we define the minimal length scale coming from the corrector function; this is the quantity that will determine the rate of convergence in our main result.
\begin{Definition}
The \emph{minimal length scale} of the $\e$-oscillatory obstacle problem is
\begin{equation}\label{minlengthscale}
\mathfrak{r}(\ep) := \left(\e^{p}\mathcal{E}(\lambda^{-1}\ep^{2-p}) \right)^{1/2}.
\end{equation}
\end{Definition}
We will assume henceforth that $\lim_{\e \to 0} \mathfrak{r}(\ep) = 0$; this is a requirement on the exponent $p$.  The condition $p \geq 0$ is always sufficient, but we may consider $p <0$ as well. For instance, when $\mathcal{E}(\mu)= O(\mu)$, this holds for all $p \in\R$, while when $\mathcal{E}(\mu) = O(\mu^\alpha)$, this holds when $(1-\alpha)p + 2 \alpha >0$.

Finally, we define the notion of ``bulk'' free boundary for the $\e$-oscillatory problem, which we consider to be an appropriate proxy for the free boundary $\G_{\e}$ when establishing quantitative convergence results.

\begin{Definition}\label{d.bulk}
The \emph{bulk contact set} of the $\e$ obstacle problem, denoted $\tilde{\Lambda}_{\e}$, is the union of cubes in the  $4(\lambda^{-1}2n)^{\frac{1}{2}}\mathfrak{r}(\e) \Z^n$ lattice that intersect $\Lambda_\ep$. The \emph{bulk free boundary} of the $\e$ obstacle problem is the set $\tilde{\G}_{\e} := \partial \tilde{\Lambda}_{\e} \cap U$.
\end{Definition}

We can now state our main result.

\begin{theorem}\label{thm:main}
Assume $\G_0$ consists only of regular points, in the sense of Caffarelli \cite{CaffarelliObstacleRevisited}. Then there exists $C \geq 1$ depending on the solution of \eqref{homogOP} such that for all $\e \leq 1$,
 \[d_H(\Lambda_0, \tilde{\Lambda}_{\e}) \leq C \mathfrak{r}(\e) \qquad \text{and} \qquad d_H(\Gamma_0,\tilde{\Gamma}_\ep) \leq C\mathfrak{r}(\ep).\]
 \end{theorem}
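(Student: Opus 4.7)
The plan is to combine two ingredients: a uniform closeness estimate $\|u_\ep - u_0\|_\infty \leq C\mathfrak{r}(\ep)^2$, established via corrector-based barriers (e.g.\ by showing that $u_\ep$ suitably perturbed by $\ep^p(\chi_\mu(\cdot/\ep) - \psi(\cdot/\ep)) + \ep^p\mathcal{E}(\mu)$ is essentially a supersolution competitor for the problem defining $u_0$, the choice $\mu = \lambda^{-1}\ep^{2-p}$ being dictated by the rescaling $\Delta\chi_\mu \leq \mu$ and the ellipticity of $\varphi_0$); together with the classical Caffarelli $C^{1,1}$ regularity and non-degeneracy of $u_0$ at regular free boundary points, which yields
\[ c\,\dist(x,\Lambda_0)^2 \leq u_0(x) - \varphi_0(x) \leq C\,\dist(x,\Lambda_0)^2 \]
in a neighborhood of $\Gamma_0$. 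The closeness estimate translates $L^\infty$-closeness of the solutions into Hausdorff closeness of the contact sets via this non-degeneracy.

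For the inclusion $\tilde{\Lambda}_\ep \subset B_{C\mathfrak{r}(\ep)}(\Lambda_0)$, I would argue directly. For any $y \in \Lambda_\ep$, using $u_\ep(y) = \varphi_\ep(y)$ and $\psi \leq 0$,
\[ u_0(y) - \varphi_0(y) \leq (u_0(y) - u_\ep(y)) + (\varphi_\ep(y) - \varphi_0(y)) \leq C\mathfrak{r}(\ep)^2, \]
so non-degeneracy yields $\dist(y,\Lambda_0) \leq C'\mathfrak{r}(\ep)$. Since each cube comprising $\tilde{\Lambda}_\ep$ has diameter $O(\mathfrak{r}(\ep))$ and meets $\Lambda_\ep$, the inclusion follows.

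For the reverse inclusion $\Lambda_0 \subset B_{C\mathfrak{r}(\ep)}(\tilde{\Lambda}_\ep)$, I would proceed by contradiction on each lattice cube. Fix $x \in \Lambda_0$, let $Q$ be the lattice cube of side $R = 4(\lambda^{-1}2n)^{1/2}\mathfrak{r}(\ep)$ containing $x$, and suppose $Q \cap \Lambda_\ep = \emptyset$, so $u_\ep$ is harmonic in $Q$. When $Q \subset \Lambda_0$, $u_\ep \leq u_0 = \varphi_0$ on $\partial Q$, and $g := \varphi_0 - u_\ep$ is nonnegative on $\partial Q$ with $\Delta g \leq -\lambda$ in $Q$. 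Comparison with the quadratic barrier $\frac{\lambda}{2n}(R^2/4 - |z-x_Q|^2)$ yields $g > 0$ throughout the inscribed ball $B_{R/2}(x_Q)$, with central dip $g(x_Q) \geq 4\mathfrak{r}(\ep)^2$. Provided the inscribed ball contains a point $y^*$ with $\psi(y^*/\ep) = 0$ (which holds because the analysis of Section \ref{sec:corrector} ensures $\mathfrak{r}(\ep) \gtrsim \ep$), strict positivity of $g$ at $y^*$ forces $u_\ep(y^*) < \varphi_0(y^*) = \varphi_\ep(y^*)$, contradicting $u_\ep \geq \varphi_\ep$. The prefactor in the cube size is calibrated precisely so that the central dip $4\mathfrak{r}(\ep)^2$ dominates the $O(\mathfrak{r}(\ep)^2)$ boundary gap from the closeness estimate, which extends the contradiction to cubes $Q$ straddling $\Gamma_0$.

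The Hausdorff bound on the free boundaries $d_H(\Gamma_0,\tilde{\Gamma}_\ep) \leq C\mathfrak{r}(\ep)$ then follows by combining the contact-set bound with a parallel non-degeneracy observation: for $z \in \Omega_0$ with $\dist(z,\Gamma_0) \geq C_0 \mathfrak{r}(\ep)$ ($C_0$ large), the lower bound $u_0(z) - \varphi_0(z) \geq c C_0^2 \mathfrak{r}(\ep)^2$ together with closeness forces $u_\ep > \varphi_\ep$ on the entire lattice cube around $z$, so $z \notin \tilde{\Lambda}_\ep$; a point of $\tilde{\Gamma}_\ep$ then has close-by points of both $\tilde{\Lambda}_\ep$ (close to $\Lambda_0$) and its complement (not deep in $\Lambda_0$), hence is close to $\Gamma_0$. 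The main obstacle is the uniform closeness estimate $\|u_\ep - u_0\|_\infty \leq C\mathfrak{r}(\ep)^2$: producing a valid global competitor above $\varphi_0$ from $u_\ep$ requires careful treatment of the distributional Laplacian of the corrector correction and the boundary data on $\partial U$, and it is precisely here that the sharp dependence of $\mathcal{E}(\mu)$ on the fine behavior of $\psi$ near its maxima dictates the rate.
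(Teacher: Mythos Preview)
Your overall architecture matches the paper's: an $L^\infty$ estimate $0 \le w_0 - w_\ep \le \mathfrak{r}(\ep)^2$ (this is \pref{Linftyrate}, and it is actually easier than you suggest---once \lref{nondegen} is replaced by Lemma~\ref{lem:correctedobstacle}, both inequalities are one-line comparisons), combined with the regularity and non-degeneracy of $w_0$. The inclusion $\tilde\Lambda_\ep \subset \Lambda_0 + C\mathfrak{r}(\ep)$ is exactly the paper's Case~1 argument.

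Where you diverge is in the opposite inclusion. The paper proves a non-degeneracy lemma for $w_\ep$ itself (\lref{nondegen}): for $z$ at distance $>(\lambda^{-1}2n)^{1/2}\mathfrak{r}(\ep)$ from $\Lambda_\ep$, $\sup_{B_r(z)} w_\ep \ge \tfrac{\lambda}{2n}r^2 - \mathfrak{r}(\ep)^2$, via the standard Caffarelli barrier $w_\ep - \tfrac{\lambda}{2n}|x-z|^2$. This replaces your harmonicity-plus-peak argument and drives both ``deep in $\Lambda_0$'' cases cleanly. Your alternative---use $\Delta u_\ep = 0$ in a contact-free cube $Q$, compare with $\varphi_0$, and locate a point $y^*$ with $\psi(y^*/\ep)=0$---is an interesting idea, but as written it has two genuine gaps.

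First, the claim that Section~\ref{sec:corrector} ensures $\mathfrak{r}(\ep)\gtrsim \ep$ is not correct: \tref{emubound} gives only \emph{upper} bounds on $\mathcal{E}(\mu)$. The lower bound $\mathcal{E}(\mu)\ge c(\psi)\mu$ is true (on any ball where $\psi<-\delta$, eventually $\chi_\mu>\psi$, so $\Delta\chi_\mu=\mu$ there and a torsion-function comparison gives it), but you must supply this argument. Second, the ``straddling'' case does not go through with the prefactor $4$ in Definition~\ref{d.bulk}: when $Q\not\subset\Lambda_0$, on $\partial Q\cap\Omega_0$ you only know $g=\varphi_0-u_\ep \ge -w_0 \ge -M(\diam Q)^2$, and $M(\diam Q)^2 = 32Mn^2\lambda^{-1}\mathfrak{r}(\ep)^2$ can dwarf your central dip $4\mathfrak{r}(\ep)^2$. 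Since the cube scale is \emph{fixed} by \dref{bulk} (independent of $M$), you cannot ``calibrate'' it. The fix is to drop the straddling case entirely and use property~(iii): positive density of $\Lambda_0$ produces, near any $x\in\Lambda_0$, a lattice cube fully contained in $\Lambda_0$, to which your non-straddling argument applies. With these two repairs your route works; the paper's route via \lref{nondegen} avoids both issues and needs neither $\mathfrak{r}(\ep)\gtrsim\ep$ nor any cube-interior peak of $\psi$.
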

 

We refer to Section 4 for the precise regularity properties of $\Gamma_0$ that are assumed.  We also prove a rate of convergence of the gradients which can be found in \sref{gradient}.

\begin{Remark} The estimate $d_H(\Lambda_0, \Lambda_{\e}) \leq C \mathfrak{r}(\e)$ also holds, the notion of bulk coincidence set / bulk free boundary are really needed for comparing the free boundaries. 
\end{Remark}

\subsection{Literature}  The obstacle problem is a classical and much studied example of PDE problem featuring a free boundary.  It was realized some time ago, maybe first by De Giorgi, Dal Maso and Longo \cite{DeGiorgi}, that the $\Gamma$-limit of an obstacle-type minimization problem may be of a different type depending critically on the capacity of the peaks of the obstacle.  This phenomenon has been studied significantly in \cite{CarboneColombini,AttouchPicard1,AttouchPicard2,DalMasoLongo,CaffarelliLee,CioranescuMurat}.  

There is also  work on the stability of the obstacle problem under perturbations of the obstacle \cite{Blank, BlankLeCrone, CaffarelliBUMI, SerfatySerra}. The primary difference between these works and ours is that the stability is measured with respect to strong norms on the Laplacian of the obstacle ($L^\infty$ or $L^1$). In our case the perturbation of the Laplacian of the obstacle is only small in a weak / negative order sense.  



One motivation for studying the oscillatory obstacle problem \eqref{epsOP} is its connection with the Hele-Shaw flow in periodic media \cite{ElliotJanovsky,Kim, KimMellet}. The obstacle problem studied in \cite{KimMellet} resembles \eqref{epsOP} when $p = 2$, but the nature of the transformation from the Hele-Shaw problem precludes the kind of oscillatory contact set that we are interested in here.


The works that are closest to ours are \cite{CodegoneRodrigues} and \cite{AleksanyanKuusi}. Our results quantify the qualitative convergence results obtained in \cite{CodegoneRodrigues}, and we go further by establishing the convergence of the bulk free boundary of the oscillatory obstacle problem to the free boundary of the unperturbed problem. The recent work \cite{AleksanyanKuusi} establishes a large scale regularity theory for the obstacle problem with an oscillatory divergence-form elliptic operator.  They make an assumption on ``compatibility" of the obstacle with the operator, which avoids the kind of oscillatory contact set that we study here.  However, removing this compatibility assumption in the context of oscillatory divergence form PDE operator would result in a cell problem that is more singular and apparently much more difficult than the one we study in Section \ref{sec:corrector}, so it is not clear if the notion of bulk contact set would be useful in that context.



\subsection{Acknowledgments} F.A. acknowledges support from the AMS and Simons Foundation through an AMS-Simons Travel Grant. A significant portion of this work was carried out while F.A. held a postdoctoral position in the Department of Mathematics at the University of Utah, whose support is also gratefully acknowledged. W.F. acknowledges the support of NSF grant DMS-2009286.

\section{Corrector and Cell Problem}\label{sec:corrector}

This section is devoted to the study of a cell problem that is meant to describe the local behavior of the $\e$-obstacle solution $u_{\e}$ above its contact set $\Lambda_{\e}$.  Recall the corrector function $\chi_{\mu}$ defined in \eqref{correctorproblem} and its height $\mathcal{E}(\mu)$ defined in \eqref{heightofchimu}.
Our goal is to show (with quantitative estimate) that $\mathcal{E}(\mu) \to 0$ as $\mu \to 0$. 

We point out that a simple integration-by-parts argument yields the following bound on the Dirichlet energy in a unit cell:
\[ \int_{\T^n} |\nabla \chi_\mu|^2 dx = \int_{\T^n} (-\chi_\mu) \Delta \chi_\mu dx \leq \mu \mathcal{E}(\mu).\]
The rate of convergence $\mathcal{E}(\mu) \to 0$ is sensitive to the structure of $\psi$ near its zero level set, in particular, the co-dimension of the zero level set and the regularity of $\psi$ near the zero level set. 

\begin{Theorem}\label{t.emubound}
The corrector $\chi_\mu \to 0$ as $\mu \to 0$ with the following estimates
\begin{enumerate}
\item In $n=2$ and for any $\alpha \in (0,1)$ there is a constant $C$ depending on $\|\psi\|_{C^{0,\alpha}}$ so that
\[ \mathcal{E}(\mu) \leq C\mu(1+|\log \mu|).\]

\item In $n \geq 3$ there is a constant $C$ depending on $\|\psi\|_{C^{1,1}}$ so that
\[ \mathcal{E}(\mu) \leq C\mu^{2/n}\]
or more generally depending on $\|\psi\|_{C^s} = \|\psi\|_{C^{[s],s-[s]}}$ 
\[ \mathcal{E}(\mu) \leq C(\|\psi\|_{C^s})\mu^{\frac{s}{s+n-2}} \ \hbox{ for } \ s \in (0,2].\]
\item If $\partial \{\psi<0\}$ contains a regular submanifold $\Sigma$ of codimension $k \in \{1,\dots,n\}$ then
\[\mathcal{E}(\mu) \leq \begin{cases} C\mu & k =1\\
C\mu(1+|\log \mu|) & k =2\\
C\mu^{\frac{2}{k}} & 2 < k \leq n
\end{cases} \]
where the constants $C$ depend on $\|\psi\|_{C^{1,1}}$ and the regularity of the parametrization of $\Sigma$.
\end{enumerate}
\end{Theorem}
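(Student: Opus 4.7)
The plan is to construct an explicit subsolution to the obstacle problem \eqref{correctorproblem} using a torsion function, and then to optimize an associated length scale. For a closed set $K \subset \T^n$, set $A := \sup_{K}(-\psi)$ and let $T_K$ be the $\Z^n$-periodic torsion function of $\T^n \setminus K$, characterized by $-\Delta T_K = 1$ on $\T^n \setminus K$ and $T_K = 0$ on $K$. Then $\phi_K := -A - \mu T_K$ satisfies $\Delta \phi_K = \mu$ on $\T^n \setminus K$ and $\phi_K = -A \leq \psi$ on $K$. Testing the variational inequality for $\chi_\mu$ with the admissible competitor $\max(\chi_\mu,\phi_K)$, and the weak inequality $\Delta \phi_K \leq \mu$ with the nonnegative test function $(\phi_K - \chi_\mu)_+$, one subtracts to get $\int |\nabla(\chi_\mu-\phi_K)|^2\,\indicator_{\phi_K>\chi_\mu}\leq 0$, hence $\phi_K \leq \chi_\mu$. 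This yields the basic bound
\[ \mathcal{E}(\mu) \leq A + \mu\,\|T_K\|_{L^\infty(\T^n)}. \]

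The technical heart of the argument is an $L^\infty$ estimate on $T_K$ when $K = T_\rho := \{x \in \T^n : \dist(x,\Sigma) \leq \rho\}$ is the $\rho$-tube around a smooth codimension-$k$ submanifold $\Sigma \subset \T^n$:
\[ \|T_{T_\rho}\|_{L^\infty} \leq C(\Sigma)\, \omega_k(\rho), \qquad \omega_k(\rho) := \begin{cases} 1 & k=1, \\ 1 + |\log\rho| & k = 2, \\ \rho^{2-k} & k \geq 3. \end{cases} \]
I would establish this by comparison with explicit supersolutions in Fermi coordinates around $\Sigma$. In these coordinates the Laplacian splits, up to bounded curvature corrections, as $\Delta_\Sigma + \Delta_{\R^k}$, and on the annular region $\rho \leq r \leq R$ (with $r = \dist(\cdot,\Sigma)$ and $R$ a fixed fraction of the injectivity radius of $\Sigma$) the radial profile $\tfrac{R^2-r^2}{2k}$, combined with $c\log(R/r)$ for $k = 2$ or $\tfrac{c}{k-2}(\rho^{2-k}-r^{2-k})$ for $k \geq 3$, realizes a supersolution of size $\omega_k(\rho)$. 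Gluing to a bounded torsion-type supersolution outside $T_R$ adds only a constant. This is the main difficulty, because the curvature corrections and the matching at the periodicity scale must be handled uniformly across codimensions.

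With the torsion estimate in hand, each of (1)--(3) is a matter of choosing $\Sigma$ and optimizing $\rho$. In part (3), $\Sigma$ is the given codimension-$k$ submanifold; since $\psi \in C^{1,1}$ vanishes on $\Sigma$ at its maximum value $0$, a Taylor argument gives $A \leq C\rho^2$, and minimizing $C\rho^2 + C\mu\omega_k(\rho)$ in $\rho$ produces the three claimed rates ($\rho \to 0$ for $k=1$, $\rho \sim \mu^{1/2}$ for $k=2$, $\rho \sim \mu^{1/k}$ for $k \geq 3$). For parts (1) and (2), take $\Sigma = \{x_0\}$ for a maximum point $x_0$, so that $k = n$; the $C^s$-bound on $\psi$ gives $A \leq C\rho^{s\wedge 2}$, and optimizing $C\rho^{s\wedge 2} + C\mu\omega_n(\rho)$ in $\rho$ yields $\mu(1+|\log\mu|)$ when $n=2$ and $\mu^{s/(s+n-2)}$ when $n \geq 3$.
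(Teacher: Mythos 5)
Your proposal is correct and follows the same basic strategy as the paper --- build an explicit global barrier from below whose Laplacian equals $\mu$ away from a small neighborhood of the maximum set of $\psi$, conclude $\chi_\mu \geq$ barrier by comparison, and then optimize the size $\rho$ (the paper's $r$) of that neighborhood against the Taylor bound $-\psi \leq C\rho^{s}$ --- but the implementation of the key potential-theoretic input differs. The paper works with the periodic Green's function $G$ on $\T^n$ (respectively the single-layer potential $g(x)=\int_\Sigma G(x-y)\,dS(y)$ in part (3)), for which the needed bounds near the singular set ($|\log|x||$, $|x|^{2-n}$, $d(x,\Sigma)^{2-k}$) are standard pointwise asymptotics or direct integral estimates; the barrier is then $\mu(G-\mathrm{const})-B r^2$, compared with $\chi_\mu$ only on $\T^n\setminus B_r$, with the trivial bound $\chi_\mu\geq\psi\geq -Br^2$ used inside $B_r$. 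You instead use the torsion function $T_{T_\rho}$ of the complement of a $\rho$-tube, which packages the same information but shifts the burden onto the estimate $\|T_{T_\rho}\|_\infty\leq C\,\omega_k(\rho)$, which you propose to prove via Fermi coordinates, curvature corrections, and a gluing argument. That estimate is true and your sketch is plausible, but it is substantially heavier than what the paper needs, and it is the one genuinely unproven step in your write-up; note that the cleanest proof of it is essentially the paper's construction (compare $T_{T_\rho}$ with $\sup_{T_\rho} g - g + C$ after normalizing $\Delta g=1$ off $\Sigma$), so you would likely end up reproducing the Green's function argument anyway. The variational comparison you use to get $\phi_K\leq\chi_\mu$ is fine, though since $\chi_\mu$ is defined as a minimal supersolution the ordinary comparison principle on the open set $\{\phi_K>\chi_\mu\}\subset\T^n\setminus K$ gives this in one line. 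Your optimizations in $\rho$ for all three parts reproduce the stated rates correctly, including the logarithm for $k=2$ and the exponent $s/(s+n-2)$.
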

\begin{example}
If $\psi : \R^n \to \R$ is laminar, i.e. it only depends on $m<n$ variables, then part (3) applies with $k = m$.
\end{example}

\begin{proof}
Let $G(x)$ be the Green's function for the Laplace operator on the torus $\mathbb{T}^n = \R^n \mod \Z^n$ solving
\[ \Delta G = 1 - \sum_{k \in \Z^n}\delta_k\]
and normalized so that $\min G = 0$. Standard Green's function estimates give
  \[ A = \sup_{B_{1/2}(0)} |G(x) + \frac{1}{2\pi}\log |x| |< + \infty \ \hbox{ in } \ n = 2\]
  and
\[ A = \sup_{B_{1/2}(0)} |G(x) - \alpha_n|x|^{2-n}|< + \infty \ \hbox{ in } \ n \geq 3.\]
Suppose, without loss, that $0 \in \{\psi = 0\}$ and
\[ \psi(x) \geq - B|x|^2 \ \hbox{ for } \ |x| \leq 1/2,\]
this is always true for some $B > 0$ because $\psi$ is smooth and achieves its maximum at $0$.  

Then define, for some $r < 1/2$ to be chosen,
\[ h(x) = \mu(G(x) +\frac{1}{2\pi}\log(r) - A) - Br^2 \ \hbox{ in } \ n =2\]
or
\[ h(x) = \mu(G(x) - \alpha_nr^{2-n} - A) - Br^2 \ \hbox{ in } \ n \geq 3\]
so that 
\[ h(x) \leq \psi(x) \ \hbox{ on } \ \partial B_r(0).\]
By the comparison principle,
\[ h(x) \leq \chi_\mu(x) \ \hbox{ in } \ \T^n \setminus B_r(0).\]
Note that, in $n \geq 3$,
\[ -\min h \leq \alpha_n\mu r^{2-n} + A\mu +Br^2\]
the right hand side is minimized when $\mu = r^n$ and
\[ - \min h \lesssim \mu^{\frac{2}{n}}.\]
When $n=2$
\[ -\min h \leq \mu \frac{1}{2\pi}|\log r| + A\mu +Br^2\]
so we choose $r^2 = \mu$ to get
\[ -\min h \lesssim \mu (1+|\log \mu|).\]
Note that in the $n=2$ case the estimate would be of the same type even if we had only assumed $\psi(x) \geq - Br^\alpha$ for some other value of $\alpha \in (0,\infty)$.

The lower bound for $\chi_\mu$ obtained above only holds in $\T^n \setminus B_r(0)$, but in $B_r(0)$ we still have $\chi_\mu \geq \psi \geq - Br^2$ which is a lower bound of the same order.

Next we consider the case when $\partial \{\psi < 0\}$ contains a regular submanifold $\Sigma$ of co-dimension $k$.   Then define
\[ g(x) = \int_{\Sigma} G(x-y) dS(y).\]
Then $\Delta g = 1$ in $\T^n \setminus \Sigma$ and standard integral estimates show that
\[ g(x) \leq \begin{cases}C &k =1 \\
C(1+|\log d(x,\Sigma)|) & k=2 \\
Cd(x,\Sigma)^{2-k} & 3 \leq k \leq n.
\end{cases}\]
Then we do the same barrier argument as in the previous argument replacing $G$ with $g$.
\end{proof}

\section{$L^{\infty}$ Estimates for the Obstacle Solutions}

Our goal in this section is to obtain $L^{\infty}$ estimates for the difference of $u_0$ and $u_{\e}$; an important result we will obtain along the way is the non-degeneracy property \lref{nondegen}. It will be convenient, at this stage, to work with appropriate height functions for each of the obstacle problems \eqref{homogOP} and \eqref{epsOP}. 
%

Let $w_0:= u_0 - \varphi_0$ be the height function for the obstacle problem \eqref{homogOP}. Then $w_0$ solves the obstacle problem
\begin{equation}\label{e.homogOPheight}
w_0(x) = \min\left\{ v :  \Delta v \leq -\Delta \varphi_0 \text{ in } U, \ v \geq 0 \text{ in } U,  \ v \geq -\varphi_0 \text{ on } \partial U \right\}.
\end{equation}
We note that $\Omega_0= \{w_0>0\} \cap U$ and $\Lambda_0 = \{w_0 =0 \} \cap U$.  From the theory for the classical obstacle problem \cite{CaffarelliObstacleRevisited}, we know that $w_0$ is $C^{1,1}$ and satisfies
\[ \Delta w_0 = -\Delta \varphi_0 \ \hbox{ in } \ \Omega_0, \ \hbox{ and } \ w_0 = |Dw_0| = 0 \ \hbox{ on } \ \Lambda_0.\]
Next consider the function $w_{\e}:= u_{\e} - \varphi_0$ which solves the obstacle problem
\begin{equation}\label{e.epOPheight}
w_\ep(x) = \min\left\{ v :  \Delta v \leq -\Delta \varphi_0 \text{ in } U, \ v \geq \ep^p\psi(x/\ep) \text{ in } U,  \ v \geq -\varphi_0 \text{ on } \partial U \right\}.
\end{equation} 
 Although $w_{\e}$ can be negative, we refer to it as a ``height function" for the oscillatory problem.  Note that 
 \[ \Omega_\ep = \{w_\ep > \ep^p\psi(x/\ep)\} \cap U \ \hbox{ and } \ \Lambda_\ep = \{w_\ep = \ep^p\psi(x/\ep)\} \cap U.\]
Also, $w_\ep$ satisfies
 \[ \Delta w_\ep = - \Delta \varphi_0 \ \hbox{ in } \ \Omega_\ep.\]
Certainly, $w_\ep \geq \ep^p \psi(x/\ep)$, but there is actually a much stronger lower bound in terms of the corrector as presented in the next Lemma.

\begin{lemma}\label{lem:correctedobstacle}
Let $w_\ep = u_\ep - \varphi_0$ as above then
$$w_{\e}(x) \geq\e^p\chi_{\lambda^{-1}\e^{2-p}}(x/\e)\quad \text{ for all } x \in U.$$
\end{lemma}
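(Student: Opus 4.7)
The plan is to rescale the height function $w_\ep$ to the unit cell-problem scale and then use a subharmonic maximum principle to compare it directly with the corrector $\chi_\mu$. Set $\mu := \lambda^{-1}\ep^{2-p}$ and define the rescaled height
\[ \tilde{w}(y) := \ep^{-p} w_\ep(\ep y), \qquad y \in B := \ep^{-1} U. \]
The desired bound is equivalent to $\tilde{w}(y) \geq \chi_\mu(y)$ on $B$, so it suffices to prove this.

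The first step is to verify that $\tilde w$ satisfies the same pair of inequalities that define $\chi_\mu$ in \eqref{correctorproblem}, namely $\tilde w \geq \psi$ and $\Delta \tilde w \leq \mu$. The obstacle condition in \eqref{e.epOPheight} gives $w_\ep(x) \geq \ep^p \psi(x/\ep)$, which rescales to $\tilde w \geq \psi$ on $B$. For the Laplacian bound, the supersolution property $\Delta u_\ep \leq 0$ combined with the ellipticity assumption $-\Delta \varphi_0 \leq \lambda^{-1}$ gives
\[ \Delta w_\ep = \Delta u_\ep - \Delta \varphi_0 \leq \lambda^{-1} \]
on $U$ in the distributional sense, with no singular contributions across $\partial \Lambda_\ep$ thanks to the $C^{1,1}$ regularity of obstacle solutions. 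Rescaling yields $\Delta_y \tilde w(y) = \ep^{2-p} \Delta_x w_\ep(\ep y) \leq \ep^{2-p} \lambda^{-1} = \mu$ on $B$. The choice $\mu = \lambda^{-1}\ep^{2-p}$ is precisely what makes this rescaling align with the corrector.

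The second step is a maximum principle argument on $h := \chi_\mu - \tilde w$. On $\partial B = \ep^{-1}\partial U$, the condition $u_\ep = 0$ on $\partial U$ gives $\tilde w = -\ep^{-p}\varphi_0(\ep\cdot) > 0$, while $\chi_\mu \leq 0$ (since the zero function is admissible in \eqref{correctorproblem}), so $h < 0$ on $\partial B$. Suppose the open set $E := \{h > 0\} \cap B$ is nonempty. On $E$, $\chi_\mu > \tilde w \geq \psi$, hence $E \subset \{\chi_\mu > \psi\}$, where $\Delta \chi_\mu = \mu$; combined with $\Delta \tilde w \leq \mu$, this gives $\Delta h \geq 0$ on $E$, so $h$ is subharmonic on the bounded open set $E$. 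By continuity $h = 0$ on $\partial E \cap B$ and $h < 0$ on $\partial E \cap \partial B$, so the maximum principle forces $\sup_E h \leq 0$, contradicting $h > 0$ on $E$. Hence $E = \emptyset$ and $\tilde w \geq \chi_\mu$ on $B$.

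The only delicate point I anticipate is justifying the distributional bound $\Delta w_\ep \leq \lambda^{-1}$ globally across the free boundary of the $\ep$-obstacle problem; this is automatic from the $C^{1,1}$ regularity of the minimal supersolution and the uniform ellipticity of $\varphi_0$, but should be invoked explicitly. Everything else reduces to the defining inequalities of the cell problem and a standard subharmonic comparison on the rescaled domain $B$.
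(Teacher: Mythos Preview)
Your argument is correct and is essentially the paper's own proof after the change of variables $y = x/\ep$: both define the difference $\chi_\mu - \tilde w$ (equivalently $\ep^p\chi_\mu(x/\ep) - w_\ep(x)$), observe that on its positivity set the corrector is strictly above $\psi$ so satisfies $\Delta\chi_\mu = \mu$, combine with $\Delta \tilde w \leq \mu$ to get subharmonicity, and conclude emptiness from the boundary sign. The appeal to $C^{1,1}$ regularity in your Step~1 is unnecessary, since $\Delta w_\ep \leq -\Delta\varphi_0 \leq \lambda^{-1}$ holds in $U$ in the viscosity sense directly from the definition \eqref{e.epOPheight} of $w_\ep$ as a minimal supersolution.
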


%

\begin{proof}

  Consider the function $v_\ep(x) =\ep^p \chi_{\lambda^{-1}\ep^{2-p}}(x/\ep) - w_\ep(x)$ and the set $V = \{v_\ep >0\}$.  Since $w_\ep >0$ on $\partial U$ we have $V \subset\subset U$.  Thus $v_\ep$ vanishes on $\partial V \cap U$.  

For any $x \in V$ we have $\ep^p \chi_{\lambda^{-1}\ep^{2-p}}(x/\ep) > w_\ep(x) \geq \ep^p \psi(x/\ep)$.  So by \eqref{correctorproblem} \[\left(\Delta \e^{p} \chi_{\lambda^{-1}\e^{2-p}}\right) \left(\tfrac{x}{\e} \right) = \lambda^{-1}.\]  Since $w_\ep$ satisfies $\Delta w_\ep \leq -\Delta \varphi_0(x) \leq \lambda^{-1}$ in $U$, it follows that $v_\ep$ is subharmonic in $V$ and vanishes on $\partial V$, from which it follows that $v_\ep \equiv 0$ in $V$, implying $V$ is empty.
\end{proof}

 As a consequence, we have the following estimate for the difference of the height functions $w_0$ and $w_{\e}$. Note that, in many cases, this is a significant improvement on the trivial $L^\infty$ estimate between $w_\ep$ and $w_0$ of order $\ep^p$; this is because, recalling the definition of $\mathcal{E}(\mu)$ from \eqref{heightofchimu}, $\min_U\e^p\chi_{\lambda^{-1}\e^{2-p}}(x/\e) = -\mathfrak{r}(\e)^2$ as long as $U$ contains a single $\ep\Z^n$ periodic cell.

\begin{proposition}\label{p.Linftyrate}
Let $\mathfrak{r}(\ep)$ as defined in \eqref{minlengthscale} then
\[ w_0 - \mathfrak{r}(\e)^2 \leq w_\ep \leq w_0 \quad \text{ in } U.\]
\end{proposition}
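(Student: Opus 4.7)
The inequality has two halves, and each follows by exhibiting an admissible competitor in one of the two obstacle-type minimizations \eqref{e.homogOPheight} and \eqref{e.epOPheight}. The key tool for the lower bound is \lref{correctedobstacle}, which (combined with the identity $\mathfrak{r}(\e)^2 = \e^p\mathcal{E}(\lambda^{-1}\e^{2-p})$) gives the pointwise lower bound $w_\e \geq -\mathfrak{r}(\e)^2$.

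\textbf{Upper bound $w_\e \leq w_0$.} I would check that $w_0$ is a competitor in the minimization \eqref{e.epOPheight} defining $w_\e$. We have $\Delta w_0 \leq -\Delta\varphi_0$ in $U$ from the obstacle problem for $w_0$; the boundary condition $w_0 \geq -\varphi_0$ on $\partial U$ is shared by both problems; and the obstacle condition $w_0 \geq \e^p \psi(x/\e)$ holds trivially because $w_0 \geq 0$ and $\psi \leq 0$. Minimality of $w_\e$ then yields $w_\e \leq w_0$.

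\textbf{Lower bound $w_\e \geq w_0 - \mathfrak{r}(\e)^2$.} The strategy is symmetric: show that $w_\e + \mathfrak{r}(\e)^2$ is a competitor in \eqref{e.homogOPheight}. The Laplacian and boundary inequalities are immediate because adding a positive constant preserves them. The only nontrivial condition is nonnegativity, $w_\e + \mathfrak{r}(\e)^2 \geq 0$, and this is exactly where the corrector enters: by \lref{correctedobstacle},
\[ w_\e(x) \geq \e^p \chi_{\lambda^{-1}\e^{2-p}}(x/\e) \geq -\e^p \,\mathcal{E}(\lambda^{-1}\e^{2-p}) = -\mathfrak{r}(\e)^2, \]
using \eqref{heightofchimu} and the definition \eqref{minlengthscale}. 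Thus $w_\e + \mathfrak{r}(\e)^2$ is admissible for \eqref{e.homogOPheight} and by minimality $w_0 \leq w_\e + \mathfrak{r}(\e)^2$.

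\textbf{Main obstacle.} There is no real obstacle here once \lref{correctedobstacle} is in hand; the content of the proposition is essentially a bookkeeping exercise converting the corrector bound into a uniform two-sided comparison between $w_\e$ and $w_0$. The one subtle point is that the lower bound for $w_\e$ coming from \lref{correctedobstacle} is a pointwise statement whose usefulness depends on $U$ containing at least a full $\e\Z^n$-periodic cell (so that the corrector attains its infimum inside $U$); for small enough $\e$ this is automatic, and for the constant-shift comparison argument above the pointwise bound $w_\e \geq -\mathfrak{r}(\e)^2$ is all that is needed.
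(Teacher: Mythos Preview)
Your proof is correct and is essentially the same as the paper's. The only cosmetic difference is in the lower bound: you shift the competitor $w_\e$ up by $\mathfrak{r}(\e)^2$ and plug it into \eqref{e.homogOPheight}, whereas the paper shifts the obstacle problem \eqref{e.homogOPheight} down by $\mathfrak{r}(\e)^2$ (using translation invariance) and plugs $w_\e$ into the shifted problem; these are the same argument. Your closing remark about $U$ containing a full $\e\Z^n$ cell is a red herring for the proof itself---the pointwise bound $w_\e \geq -\mathfrak{r}(\e)^2$ holds regardless, since $\mathfrak{r}(\e)^2 = \e^p\|\chi_{\lambda^{-1}\e^{2-p}}\|_\infty$ by definition, and that is (as you note) all that is needed.
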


\begin{proof}  To prove $w_\ep \leq w_0$, we observe that $w_0 \geq 0 \geq \ep^p\psi(x/\ep)$ in $U$ and $w_0 = \varphi_0$ on $\partial U$. Therefore $w_0$ is admissible for the minimization \eref{epOPheight} and so $w_0 \geq w_\ep$. 

Next, to show $w_0 - \mathfrak{r}(\e)^2 \leq w_\ep$, we observe that, by translation invariance, the solution of the obstacle problem on $U$ with boundary condition $-\varphi_0-\mathfrak{r}(\e)^2$ and obstacle $ - \mathfrak{r}(\e)^2$ is $w_{0} - \mathfrak{r}(\e)^2$. Since $w_{\e} \geq \ep^{p}\chi_{\lambda^{-1}\ep^{2-p}}(x/\ep) \geq  - \mathfrak{r}(\e)^2$ in $U$ and $w_{\e} = -\varphi_0 \geq \varphi_0-\mathfrak{r}(\e)^2$ on $\partial U$, we conclude that $w_0 - \mathfrak{r}(\e)^2 \leq w_{\e}$ in $U$. 
\end{proof}

\subsection{Non-degeneracy} It is well-known that the height function $w_0$ satisfies the following non-degeneracy property: for all $z \in \G_0$ and $r > 0$ such that $B_r(z) \Subset U$, we have
\begin{equation}\label{e.0nondegen}
\sup_{B_r(z)} w_0  \geq \tfrac{\lambda}{2n} r^2 .
\end{equation}
For the height function $w_{\e}$ of the oscillatory problem, we will establish an analogous non-degeneracy statement at scales larger than $\mathfrak{r}(\ep)$. 

\begin{Lemma}\label{l.nondegen} For all $z \in U$ with $\dist(z, \Lambda_\ep) >  (\lambda^{-1}2n)^{\frac{1}{2}} \mathfrak{r}(\e)$ and $r > 0$ such that $B_r(z) \Subset U$, we have
$$\sup_{B_r(z)} w_{\e} \geq \tfrac{\lambda}{2n} r^2 - \mathfrak{r}(\ep) ^2.$$
\end{Lemma}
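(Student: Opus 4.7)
The plan is to mimic the classical non-degeneracy argument for the unperturbed height function $w_0$, but using a comparison parabola shifted downward by $\mathfrak{r}(\ep)^2$ to absorb the fact that $w_\ep$, unlike $w_0$, can dip below zero on account of the oscillatory obstacle. Specifically, I would introduce the auxiliary function
\[
g(x) := w_\ep(x) - \tfrac{\lambda}{2n}|x-z|^2 + \mathfrak{r}(\ep)^2
\]
and work on the open set $V := \Omega_\ep \cap B_r(z)$. Since $\Delta w_\ep = -\Delta \varphi_0 \geq \lambda$ on $\Omega_\ep$ and $\Delta\bigl(-\tfrac{\lambda}{2n}|x-z|^2\bigr) = -\lambda$, the function $g$ is subharmonic on $\Omega_\ep$, and the hypothesis $\dist(z,\Lambda_\ep) > (\lambda^{-1}2n)^{1/2}\mathfrak{r}(\ep)>0$ places $z$ in $V$ so the weak maximum principle applies.

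The next step is to control $g$ on the two pieces of $\partial V \subset (\partial \Omega_\ep \cap \overline{B_r(z)}) \cup (\partial B_r(z) \cap \overline{\Omega_\ep})$. On the free-boundary piece, which since $B_r(z) \Subset U$ lies in $\Lambda_\ep$, we have $w_\ep = \ep^p\psi(x/\ep) \leq 0$ and $|x-z| \geq \dist(z,\Lambda_\ep) > (\lambda^{-1}2n)^{1/2}\mathfrak{r}(\ep)$, so
\[
g(x) \leq \mathfrak{r}(\ep)^2 - \tfrac{\lambda}{2n}\dist(z,\Lambda_\ep)^2 < 0
\]
with a strictly negative uniform upper bound. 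Meanwhile, Proposition \ref{p.Linftyrate} gives $w_\ep(z) \geq w_0(z) - \mathfrak{r}(\ep)^2 \geq -\mathfrak{r}(\ep)^2$, so $g(z) \geq 0$. This is exactly the cancellation that the shift $\mathfrak{r}(\ep)^2$ and the threshold $(\lambda^{-1}2n)^{1/2}\mathfrak{r}(\ep)$ in the hypothesis are designed to produce.

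Finally, the weak maximum principle yields $\sup_{\partial V} g \geq g(z) \geq 0$, while $\sup_{\partial V \cap \partial \Omega_\ep} g$ is bounded above by the strictly negative quantity from the previous step. Therefore the supremum of $g$ on $\partial V$ must be attained on the spherical piece $\partial B_r(z) \cap \overline{\Omega_\ep}$, where $g = w_\ep - \tfrac{\lambda}{2n}r^2 + \mathfrak{r}(\ep)^2$. Rearranging and using continuity of $w_\ep$ up to $\partial B_r(z)$ gives $\sup_{B_r(z)} w_\ep \geq \tfrac{\lambda}{2n}r^2 - \mathfrak{r}(\ep)^2$.

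The main obstacle, such as it is, is bookkeeping rather than analysis: one has to choose the vertical shift in $g$ and the distance threshold on $z$ in a compatible way so that the same quantity $\mathfrak{r}(\ep)^2$ simultaneously (i) compensates for the worst-case negativity of $w_\ep(z)$ coming from Proposition \ref{p.Linftyrate}, and (ii) is beaten by the parabolic $\tfrac{\lambda}{2n}|x-z|^2$ term on the free-boundary portion of $\partial V$. The specific constant $(\lambda^{-1}2n)^{1/2}$ in the statement is precisely what makes these two demands meet.
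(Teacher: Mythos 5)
Your argument is correct and is essentially the paper's proof: the paper uses the same comparison function without the constant shift, namely $\zeta_\ep(x) = w_\ep(x) - \tfrac{\lambda}{2n}|x-z|^2$ on $B_r(z)\cap\Omega_\ep$, notes $\zeta_\ep(z)\ge -\mathfrak{r}(\ep)^2$, and rules out a maximum on the contact portion of the boundary by exactly the same use of the distance hypothesis. Adding $\mathfrak{r}(\ep)^2$ to the auxiliary function and deriving the lower bound on $w_\ep(z)$ from Proposition \ref{p.Linftyrate} rather than directly from Lemma \ref{lem:correctedobstacle} are cosmetic differences only.
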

\begin{proof}
On the set $D := B_r(z) \cap \Omega_\ep$, consider the function
$$\zeta_{\e}(x):= w_{\e}(x) - \tfrac{\lambda}{2n}|x-z|^2.$$
Then, by \eqref{ellipticity},
$$\Delta \zeta_{\e} = \Delta w_{\e} - \lambda = - \Delta \varphi_0 - \lambda \geq 0 \ \hbox{ in } \ D.$$
The maximum principle implies $\zeta_{\e}$ attains its maximum on $\partial D$. Furthermore, since $\zeta_{\e}(z) = w_{\e}(z) \geq - \mathfrak{r}(\e)^2$, we have $\max_D \zeta_{\e} \geq - \mathfrak{r}(\e)^2$. 

Now let $x_{\text{max}} \in \partial D$ be such that $\zeta_{\e}(x_{\text{max}}) = \max_D \zeta_{\e}$. We decompose $\partial D$ as the disjoint union $\partial D = (\partial B_r(z) \cap \O_{\e}) \cup (\overline{B_r(z)} \cap \Lambda_{\e})$. If $x_{\text{max}} \in \partial B_r(z) \cap \O_{\e}$, we have $|x_{\text{max}} - z| = r$, and so
$$-\mathfrak{r}(\e)^2 \leq \zeta_{\e}(x_{\text{max}}) = w_{\e}(x_{\text{max}}) - \tfrac{\lambda}{2n}r^2.$$
Consequently, $w_{\e}(x_{\text{max}}) \geq \frac{\lambda}{2n}r^2 - \mathfrak{r}(\e)^2$, from which it follows that $\sup_{B_r(z)} w_{\e} \geq \frac{\lambda}{2n}r^2 - \mathfrak{r}(\e)^2$ as claimed. 

If, on the other hand, we have $x_{\text{max}} \in \overline{B_r(z)} \cap \Lambda_\ep$ then
$$-\mathfrak{r}(\e)^2 \leq \zeta_{\e}(x_{\text{max}}) = w_{\e}(x_{\text{max}}) - \tfrac{\lambda}{2n}|x_{\text{max}} - z|^2  \leq - \tfrac{\lambda}{2n}|x_{\text{max}} - z|^2.$$
This implies $\frac{\lambda}{2n} |x_{\text{max}} - z|^2\leq \mathfrak{r}(\e)^2$, which contradicts the assumption $\dist(z,\Lambda_\ep)  >  (\lambda^{-1}2n)^{\frac{1}{2}} \mathfrak{r}(\e)$.
\end{proof}

\begin{Remark}\label{remark:rate}
The scale $\mathfrak{r}(\ep)$ is highly dependent on $\psi$, see \tref{emubound} above. Of particular interest is when $\mathfrak{r}(\ep) = O(\ep)$, which holds when $\mathcal{E}(\mu) = O(\mu)$ and for any value of $p \in \R$.  We believe that $\mathfrak{r}(\ep)$ is the ``correct" length scale to measure the contact set.  This is essentially because we postulate 
that the dominant term in the asymptotic expansion of the height function $w_\ep$ above the ``bulk" contact set is  the corrector $\e^p\chi_{\e^{2-p}}(x/\e)$, which has height scaling $\mathfrak{r}(\ep)^2$.  In order to grow away from this corrector via quadratic non-degeneracy at this same height scaling one needs to move distance $\mathfrak{r}(\ep)$ away from the ``bulk" contact set.
\end{Remark}

\section{Distance Estimates for the Free Boundaries}

In this final section, combine the results from the previous sections to prove Theorem \ref{thm:main}. Before we can do this, it will be necessary to make precise the regularity assumptions we make on the free boundary $\G_0$.

A well-known consequence of the classical regularity theory for the obstacle problem \cite{CaffarelliObstacleRevisited} is  a $C^{1,1}$ estimate for the height function $w_0$:

\begin{enumerate}[label = (\roman*)]
\item $C^{1,1}$ bound: $\sup_U |D^2w_0| \leq M$ with $M$ depending on the lower bound of $-\varphi_0$ on $\partial U$, on $\lambda$, and on $\|\Delta \varphi_0\|_{C^\gamma}$.
\end{enumerate}
We will also assume that $\G_0$ consists only of regular points in the sense of Caffarelli \cite{CaffarelliObstacleRevisited}. This leads to the following regularity properties:
\begin{enumerate}[label = (\roman*),resume]
\item Strong Non-Degeneracy: there exists $c_1 > 0$ such that if $x \in \O_0$ then $w_0(x) \geq c_1 d(x,\G_0)^2$. 

\item Uniform Positive Density of Contact Region: there exists a constant  $c_2 \in (0,\frac{1}{2})$ such that for any $r>0$ and $x \in \Lambda_0$, there exists $y \in \Lambda_0 \cap B_r(x)$ such that $B_{c_2 r}(y) \Subset  \Lambda_0 \cap B_r(x)$. 

\end{enumerate}

Both properties follow from well-known regularity results for the classical obstacle problem. We make some convenient citations: for (ii) apply \cite[Lemma 5.5]{AleksanyanKuusi}, for (iii) apply \cite[Theorem 7]{CaffarelliObstacleRevisited} and a compactness argument.

We recall the definition of bulk free boundary from \dref{bulk}.
\begin{definition*}\label{bulkfreeboundary}
The \emph{bulk contact set} of the $\e$ obstacle problem, denoted $\tilde{\Lambda}_{\e}$, is the union of cubes in the  $4(\lambda^{-1}2n)^{\frac{1}{2}}\mathfrak{r}(\e) \Z^n$ lattice that intersect $\Lambda_\ep$. The \emph{bulk free boundary} of the $\e$ obstacle problem is the set $\tilde{\G}_{\e} := \partial \tilde{\Lambda}_{\e} \cap U$.
\end{definition*}

Note that if $x \in \tilde{\G}_{\e}$, then $x$ belongs to some $4(\lambda^{-1}2n)^{\frac{1}{2}}\mathfrak{r}(\e) \Z^n$ lattice cube $Q_x$ and there is a neighboring $4(\lambda^{-1}2n)^{\frac{1}{2}}\mathfrak{r}(\e) \Z^n$ lattice cube $\hat{Q}_{x}$ such that $x \in \partial Q_x \cap \partial \hat{Q}_{x}$ and $\hat{Q}_{x} \subset \Omega_\ep$. The center $z$ of $\hat{Q}_{x}$ then satisfies $\text{dist}(z,\Lambda_\ep) \geq 2(\lambda^{-1}2n)^{\frac{1}{2}}\mathfrak{r}(\e) > (\lambda^{-1}2n)^{\frac{1}{2}}\mathfrak{r}(\e) $. In particular \lref{nondegen} can be applied at $z$.

The following non-degeneracy statement at the bulk free boundary is an immediate consequence of \lref{nondegen}. Such a non-degeneracy property can be viewed as an essential attribute of a ``good" notion of bulk free boundary.

\begin{corollary}\label{c.nondegen2}
There is $c(n,\lambda)>0$ so that for all $r > 0$, if $x \in \tilde{\Gamma}_\ep$ then 
\[ \sup_{B_r(x)} w_\ep \geq c(n,\lambda) r^2 - 2\mathfrak{r}(\ep)^2.\]
\end{corollary}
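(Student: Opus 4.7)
The plan is to reduce the claim to \lref{nondegen} by a controlled translation from $x$ to the center of the neighboring interior cube. The geometric setup appears already in the paragraph preceding the corollary: given $x \in \tilde{\G}_\ep$, there is a $4(\lambda^{-1}2n)^{1/2}\mathfrak{r}(\ep)\Z^n$-lattice cube $\hat{Q}_x \subset \O_\ep$ of side length $L := 4(\lambda^{-1}2n)^{1/2}\mathfrak{r}(\ep)$ meeting $x$ on its boundary, whose center $z$ satisfies $\dist(z, \Lambda_\ep) \geq L/2 > (\lambda^{-1}2n)^{1/2}\mathfrak{r}(\ep)$, so that \lref{nondegen} applies at $z$. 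The vertex-to-center estimate gives the translation bound $|x-z| \leq \sqrt{n}\, L/2 =: C_0 \mathfrak{r}(\ep)$, where $C_0$ depends only on $n$ and $\lambda$.

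The argument then splits into two regimes according to the size of $r$ relative to $C_0 \mathfrak{r}(\ep)$. In the large-radius regime $r \geq 2 C_0 \mathfrak{r}(\ep)$, the inclusion $B_{r/2}(z) \subset B_r(x)$ combined with \lref{nondegen} applied at $z$ with radius $r/2$ yields
\[
\sup_{B_r(x)} w_\ep \;\geq\; \sup_{B_{r/2}(z)} w_\ep \;\geq\; \frac{\lambda}{8n}\, r^2 - \mathfrak{r}(\ep)^2,
\]
which is stronger than the claim with constant $\lambda/(8n)$. In the small-radius regime $r < 2 C_0 \mathfrak{r}(\ep)$, the target quantity satisfies $c\, r^2 - 2\mathfrak{r}(\ep)^2 \leq (4 c C_0^2 - 2)\mathfrak{r}(\ep)^2 \leq -\mathfrak{r}(\ep)^2$ once $c \leq 1/(4 C_0^2)$, while \pref{Linftyrate} (together with $w_0 \geq 0$ in $U$) supplies the trivial lower bound $w_\ep \geq -\mathfrak{r}(\ep)^2$, making the inequality immediate. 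Choosing $c(n,\lambda) := \min\bigl\{\lambda/(8n),\, 1/(4 C_0^2)\bigr\}$ completes the proof.

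There is no real obstacle: the corollary is essentially a bookkeeping consequence of \lref{nondegen} once the lattice geometry is unpacked. The only detail requiring care is ensuring $B_{r/2}(z) \Subset U$ in the large-radius case so that \lref{nondegen} is applicable; this follows from the standing assumption $B_r(x) \Subset U$ (implicit in the statement, as elsewhere in Section 4), since $|x-z| \leq C_0 \mathfrak{r}(\ep) \leq r/2$.
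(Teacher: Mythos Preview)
Your proof is correct and follows essentially the same approach as the paper: split according to whether $r$ is small or large relative to $|x-z|$, use the trivial lower bound $w_\ep \geq -\mathfrak{r}(\ep)^2$ in the small-radius regime, and apply \lref{nondegen} centered at $z$ in the large-radius regime. The paper's proof is terser (splitting at $2|x-z|$ rather than the explicit upper bound $2C_0\mathfrak{r}(\ep)$), but the content is identical.
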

\begin{proof}
Let $z$ be as defined in the preceding paragraph. For $r \leq 2|x-z|$ use $w_\ep \geq - \mathfrak{r}(\ep)^2$, for $r \geq 2|x-z|$ use \lref{nondegen} centered at $z$.
\end{proof}

The properties (i), (ii) and (iii) of $\G_0$ are sufficient to derive an $\mathfrak{r}(\ep)$ rate of convergence of $\tilde{\Gamma}_\ep$ to $\Gamma_0$ in the Hausdorff distance, thus proving Theorem \ref{thm:main}.

\begin{Proposition}
There exists $C  > 0$ depending on $n,\lambda$ and the quantity $c_1$ from property (ii) such that for all $\e \leq 1$ and $x \in \tilde{\G}_{\e}$, we have
$$d(x,\G_0) \leq C\mathfrak{r}(\e).$$
\end{Proposition}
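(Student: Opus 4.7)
The plan is to argue by contradiction. Fix $x \in \tilde{\G}_\ep$ and a large constant $C$ (depending on $n$, $\lambda$, and $c_1$) to be determined, and assume toward contradiction that $d(x,\G_0) > C\mathfrak{r}(\ep)$. A preliminary observation is that $\Lambda_\ep$ stays uniformly away from $\partial U$ (since $\varphi_\ep < 0$ on a fixed neighborhood of $\partial U$, forcing $u_\ep > \varphi_\ep$ there), so the ball $B := B_{C\mathfrak{r}(\e)}(x)$ is contained in $U$ once $\ep$ is small. Because $\G_0 \cap B = \emptyset$, the connected ball $B$ lies entirely in either $\Lambda_0$ or $\O_0$, and I will derive a contradiction in each case using only the already-established estimates.

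In the case $B \subset \Lambda_0$, one has $w_0 \equiv 0$ on $B$, so the upper bound in Proposition \ref{p.Linftyrate} forces $w_\ep \leq 0$ throughout $B$. On the other hand, the non-degeneracy at the bulk free boundary (Corollary \ref{c.nondegen2}) applied at $x$ with radius $r = C\mathfrak{r}(\e)$ gives
\[\sup_{B} w_\ep \geq c(n,\lambda)\, C^2\, \mathfrak{r}(\e)^2 - 2\mathfrak{r}(\e)^2.\]
Choosing $C$ large enough that the right side is strictly positive contradicts $w_\ep \leq 0$ on $B$.

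In the case $B \subset \O_0$, I use the structural property that $x \in \tilde{\G}_\ep$ lies in a lattice cube $Q_x$ of side $4(\lambda^{-1}2n)^{1/2}\mathfrak{r}(\e)$ that meets $\Lambda_\ep$; hence there exists $y \in \Lambda_\ep$ with $|x-y| \leq C_1\mathfrak{r}(\e)$ for a dimensional constant $C_1$. At such a $y$ one has $w_\ep(y) = \e^p\psi(y/\e) \leq 0$, so the lower bound in Proposition \ref{p.Linftyrate} yields $w_0(y) \leq \mathfrak{r}(\e)^2$. However, if $C > C_1$ then $y \in \O_0$ with $d(y,\G_0) \geq (C-C_1)\mathfrak{r}(\e)$, so strong non-degeneracy (property (ii) of $w_0$) gives
\[w_0(y) \geq c_1\,(C - C_1)^2\, \mathfrak{r}(\e)^2,\]
and taking $C$ so large that $c_1(C-C_1)^2 > 1$ produces the contradiction. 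The main obstacle is really just the bookkeeping: choosing a single $C = C(n,\lambda,c_1)$ that simultaneously defeats both cases and handling the tacit assumption that $B \Subset U$; there is no genuinely deep step beyond combining Corollary \ref{c.nondegen2} with the $L^\infty$ comparison of Proposition \ref{p.Linftyrate} and the classical non-degeneracy of $w_0$.
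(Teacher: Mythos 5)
Your proof is correct and follows essentially the same route as the paper's: the same dichotomy ($B\subset\Omega_0$ versus $B\subset\Lambda_0$), with Corollary \ref{c.nondegen2} handling the contact-set case and the strong non-degeneracy of $w_0$ combined with Proposition \ref{p.Linftyrate} handling the non-contact case. The only difference is cosmetic — you phrase it as a contradiction with $r=C\mathfrak{r}(\ep)$ while the paper argues directly with $r=d(x,\Gamma_0)$.
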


\begin{proof} Let $r= d(x,\Gamma_0)$. There are two possibilities:

\emph{Case 1:} $B_r(x) \subset \O_0$.


Let $Q_x$ be the $4(\lambda^{-1}2n)^{\frac{1}{2}}\mathfrak{r}(\e) \Z^n$ lattice cube containing $x$. By definition of $\tilde{\G}_{\e}$ there is a $4(\lambda^{-1}2n)^{\frac{1}{2}}\mathfrak{r}(\e) \Z^n$ lattice cube $Q_x$ such that $x \in \partial Q_x$ and $Q_x \cap \Lambda_\ep \neq \emptyset$; i.e. there is $z \in Q_x$ such that $w_{\e}(z) = \ep^p \psi(z/\ep) \leq 0$. 

Applying the strong non-degeneracy of $w_0$ at $z$,  we have
$$c_1 d(z,\G_0)^2 \leq w_0(z) \leq \mathfrak{r}(\e)^2 +w_{\e}(z) \leq \mathfrak{r}(\e)^2.$$
Thus $ d(z,\G_0) \leq c_1^{-1/2} \mathfrak{r}(\e)$. Since $d(x,z) \leq c(n,\lambda) \mathfrak{r}(\e)$, it follows that $d(x,\G_0) \leq C\mathfrak{r}(\e)$ as claimed.

\emph{Case 2:} $B_r(x) \subset \Lambda_0$.


Applying \cref{nondegen2}
$$cr^2 - 2\mathfrak{r}(\e)^2 \leq \sup_{B_{r}(x)} w_{\e} = \sup_{B_{r}(x)} (w_{\e} - w_0) \leq \mathfrak{r}(\e)^2.$$
It follows that $d(x,\G_0) \leq C \mathfrak{r}(\e)$.

\end{proof}

\begin{Proposition}
There is $C\geq1$ depending on $n$, $\lambda$ and the parameters from property (i) and (iii) above so that for all $x \in \G_0$
$$d(x,\tilde{\G}_{\e}) \leq C\mathfrak{r}(\e).$$
\end{Proposition}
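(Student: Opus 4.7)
The plan is to exhibit, within the ball $B_{C\mathfrak{r}(\ep)}(x)$ for some $C$ to be determined, both a lattice cube contained in $\tilde{\Lambda}_\ep$ and a lattice cube disjoint from $\tilde{\Lambda}_\ep$. A path of nearest-neighbor lattice cubes connecting the two must then include an adjacent pair whose shared face lies on $\tilde{\Gamma}_\ep$ within $C\mathfrak{r}(\ep)$ of $x$, which gives the desired bound. Throughout I set $r = K\mathfrak{r}(\ep)$ with $K$ a large constant to be chosen, depending on $n$, $\lambda$, $c_1$, $c_2$, and the regularity of $\Gamma_0$ at $x$.

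To locate a cube inside $\tilde{\Lambda}_\ep$, I would apply property (iii) at $x$ to obtain $y_2 \in \Lambda_0 \cap B_r(x)$ with $B_{c_2 r}(y_2) \Subset \Lambda_0$. Since $w_0 \equiv 0$ on this ball, \pref{Linftyrate} forces $w_\ep \leq 0$ there. If one supposes $B_{c_2 r}(y_2) \subset \Omega_\ep$, then $\Delta w_\ep = -\Delta\varphi_0 \geq \lambda$ throughout the ball, and the subharmonic comparison underlying \lref{nondegen} applied to $w_\ep - \tfrac{\lambda}{2n}|\cdot - y_2|^2$, combined with the corrector lower bound $w_\ep(y_2) \geq -\mathfrak{r}(\ep)^2$ (Lemma~\ref{lem:correctedobstacle}), leads to
\[
\sup_{\partial B_{c_2 r}(y_2)} w_\ep \geq \tfrac{\lambda}{2n}(c_2 r)^2 - \mathfrak{r}(\ep)^2 > 0
\]
for $K$ large, contradicting $w_\ep \leq 0$. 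Hence $\Lambda_\ep \cap B_{c_2 r}(y_2)$ is nonempty, and the lattice cube surrounding any such point lies in $\tilde{\Lambda}_\ep$ within $O(\mathfrak{r}(\ep))$ of $x$.

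To locate a cube disjoint from $\tilde{\Lambda}_\ep$, I would invoke the $C^{1,\alpha}$ regularity of $\Gamma_0$ at the regular point $x$ (Caffarelli's theorem) to produce $y_1 \in \Omega_0 \cap B_r(x)$ with $B_{c_3 r}(y_1) \subset \Omega_0$ and $d(y_1,\Gamma_0) \geq c_3 r$ for some $c_3 > 0$ depending on the regularity constants. Property (ii) then gives $w_0(z) \geq c_1(c_3 r/2)^2$ for every $z \in B_{c_3 r/2}(y_1)$, and \pref{Linftyrate} upgrades this to $w_\ep(z) \geq c_1(c_3 r/2)^2 - \mathfrak{r}(\ep)^2$. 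For $K$ large this is strictly positive, and since $\psi \leq 0$ the ball $B_{c_3 r/2}(y_1)$ sits inside $\Omega_\ep$. Enlarging $K$ further ensures this ball contains an entire $4(\lambda^{-1}2n)^{1/2}\mathfrak{r}(\ep)\Z^n$ lattice cube, which is then disjoint from $\Lambda_\ep$ and hence from $\tilde{\Lambda}_\ep$.

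The hard part is this second step: obtaining a whole lattice cube (not merely a point) inside $\Omega_\ep$ is what forces the joint use of strong non-degeneracy (ii) and the positive density of $\Omega_0$ at regular points, the latter coming from Caffarelli's $C^{1,\alpha}$ regularity of $\Gamma_0$. Once both cubes are in hand, the combinatorial conclusion is routine: connect their centers by a path of nearest-neighbor lattice cubes of length $O(\mathfrak{r}(\ep))$; the path begins in a cube disjoint from $\tilde{\Lambda}_\ep$ and ends in one contained in $\tilde{\Lambda}_\ep$, so some adjacent pair $(Q, Q')$ has $Q \subset \tilde{\Lambda}_\ep$ and $Q' \cap \tilde{\Lambda}_\ep = \emptyset$, and any point on $\partial Q \cap \partial Q'$ lies on $\tilde{\Gamma}_\ep$ within $C\mathfrak{r}(\ep)$ of $x$.
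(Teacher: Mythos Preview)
Your argument is correct, but it differs from the paper's in both strategy and the hypotheses invoked.

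The paper does not construct a nearby point of $\tilde{\Gamma}_\ep$; it sets $r:=d(x,\tilde{\Gamma}_\ep)$ and splits into the two cases $B_r(x)\subset U\setminus\tilde{\Lambda}_\ep$ and $B_r(x)\subset\tilde{\Lambda}_\ep$. Its Case~1 is essentially your first step: property (iii) gives $B_{c_2 r}(y)\Subset\Lambda_0\cap B_r(x)$, on which $w_0=0$, and \lref{nondegen} forces $c(c_2 r)^2 - 2\mathfrak{r}(\ep)^2 \leq \sup_{B_{c_2 r}(y)}w_\ep \leq \mathfrak{r}(\ep)^2$. Its Case~2 is where the approaches diverge. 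The paper uses only the \emph{basic} non-degeneracy \eref{0nondegen} and the Lipschitz bound from property~(i): take $x_{\max}\in B_r(x)$ with $w_0(x_{\max})\geq cr^2$, find $y$ within $C\mathfrak{r}(\ep)$ of $x_{\max}$ with $w_\ep(y)\leq 0$ (available because $B_r(x)\subset\tilde{\Lambda}_\ep$), and estimate
\[
cr^2 \leq w_0(x_{\max}) = (w_0(x_{\max})-w_0(y))+(w_0(y)-w_\ep(y))+w_\ep(y) \leq Mr\,\mathfrak{r}(\ep)+\mathfrak{r}(\ep)^2.
\]
This yields the stated dependence of $C$ on (i) and (iii) only.

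Your second step instead uses strong non-degeneracy (ii) together with a uniform positive density of $\Omega_0$ at regular points, the latter extracted from Caffarelli's $C^{1,\alpha}$ theory rather than from the listed properties. So your constant depends on (ii), (iii), and the $C^{1,\alpha}$ constants of $\Gamma_0$ rather than on (i) and (iii); in particular you never use the $C^{1,1}$ bound. Both routes are valid under the paper's standing regularity hypothesis on $\Gamma_0$, but the paper's Case~2 is shorter and matches exactly the dependence asserted in the Proposition.
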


\begin{proof} Fix $\e \leq \e_0$ and let $r= d(x,\tilde{\G}_{\e})$. There are two possibilities:

\emph{Case 1:} $B_r(x) \subset U \setminus \tilde{\Lambda}_{\e}$.

By the uniform positive density of $\Lambda_0$, we know there exists $y \in  \Lambda_0 \cap B_r(x)$ such that $B_{c_2 r}(y) \Subset  \Lambda_0 \cap B_r(x)$. We may also assume that $\text{dist}(y,\Lambda_\ep) > (\lambda^{-1}2n)^{\frac{1}{2}}\mathfrak{r}(\e)$, for otherwise we would already have $r \leq C \mathfrak{r}(\e)$. Applying Lemma \ref{l.nondegen} in $B_{c_2 r}(y)$, we find
$$c(c_2r)^2  -2 \mathfrak{r}(\e)^2 \leq \sup_{B_{c_2 r}(y)} w_{\e} = \sup_{B_{c_2 r}(y)} (w_{\e} - w_0) \leq \mathfrak{r}(\e)^2.$$
It follows that $r \leq C \mathfrak{r}(\e)$.

\emph{Case 2:} $B_r(x) \subset \tilde{\Lambda}_{\e}$.

Suppose $w_0$ attains its maximum on $B_r(x)$ at $x_{\text{max}} \in B_r(x)$. Assume without loss of generality that $r \geq \mathfrak{r}(\e)$. By definition of $\tilde{\Lambda}_{\e}$, we can find a point $y$ such that $|x_{\text{max}}-y| \leq C\mathfrak{r}(\e)$ and $w_{\e}(y) \leq 0$. Therefore, by the non-degeneracy of $w_0$ and the Lipschitz estimate for $w_0$ (from property (i) above), we have
\begin{align*}
Cr^2 \leq \sup_{B_r(x)} w_{0} & = w_0(x_{\text{max}}) \\
& = (w_0(x_{\text{max}}) - w_0(y)) + (w_0(y)  - w_{\e}(y)) +w_{\e}(y) \\
& \leq 2M r \mathfrak{r}(\e) + \mathfrak{r}(\e)^2 \leq (2M+1) r \mathfrak{r}(\e).
\end{align*}
Consequently, $r \leq C \mathfrak{r}(\e)$ as claimed.
\end{proof}

We remark that the Hausdorff distance estimate
\[ d_H(\Lambda_0,\tilde{\Lambda}_\ep) \leq C \mathfrak{r}(\ep)\]
holds by the same arguments as Case 1 of the previous two propositions.

\section{Gradient convergence}\label{s.gradient}

In this section we show another notion of convergence at the level of the gradient. Specifically we show that
\[  \left(\dashint_{B_{\mathfrak{r}(\ep)}(x)}|\grad w_0 - \grad w_\ep|^2 dy\right)^{1/2} \leq C \mathfrak{r}(\ep) \ \hbox{ in } \ \{ x \in U: d(x,\partial U) \geq \mathfrak{r}(\ep)\}. \] 
This can be considered as an $L^\infty$ estimate at scales above $\mathfrak{r}(\ep)$.

First of all note that $w_\ep - w_0$ is harmonic in the complement of $\Lambda_\ep \cup \Lambda_0$ so
\[ |\grad w_\ep(x) - \grad w_0(x)| \leq \frac{C}{r} \sup_{B_r(x)}|w_\ep(x) - w_0(x)| \leq \frac{C}{r}\mathfrak{r}(\ep)^2 \ \hbox{ for } \ B_r(x) \subset U \setminus (\Lambda_\ep \cup \Lambda_0).\]
We apply that estimate with $r = \mathfrak{r}(\ep)$ to obtain
\[ |\grad w_\ep(x) - \grad w_0(x)| \leq C \mathfrak{r}(\ep) \ \hbox{ for } \ d(x,\Lambda_\ep \cup \Lambda_0) \geq \mathfrak{r}(\ep).\] 
If $d(x, \Lambda_0 \cup \Lambda_\ep) \leq \mathfrak{r}(\ep)$ then the Hausdorff distance estimate established in the previous section implies
\[ d(x,\Lambda_0) \leq C\mathfrak{r}(\ep).\]
  The $C^{1,1}$ estimate of $w_0$ shows that
\[ w_0(x) \leq C\mathfrak{r}(\ep)^2, \ |\grad w_0(x)| \leq C\mathfrak{r}(\ep)\]
and the $L^\infty$ estimate of $w_\ep - w_0$ also shows
\[ w_\ep(x) \leq C\mathfrak{r}(\ep)^2.\]
We just need to establish an analogous supremum estimate of $\grad w_\ep(x)$.

First of all notice that if $w_\ep(x)>0$ then $\Delta w_\ep(x) = -\Delta \varphi_0(x)$ so $-w_\ep(x)\Delta w_\ep(x) <0$, while if $w_\ep(x) \leq 0$ then
\[ -w_\ep(x) \Delta w_\ep(x) \leq -w_\ep(x) (-\Delta \varphi_0(x)) \leq C\mathfrak{r}(\ep)^2\]
since $\Delta w_\ep \leq -\Delta \varphi_0$ everywhere.  Thus $-w_\ep(x) \Delta w_\ep(x) \leq Cr(\ep)^2$ in either case. 

Now we apply a Cacciopoli type estimate, take a standard cut-off function $\zeta$ which is $1$ in $B_{\mathfrak{r}(\ep)}(x)$ and zero outside of $B_{2\mathfrak{r}(\ep)}(x)$ with $|\grad \zeta| \leq \frac{C}{\mathfrak{r}(\ep)}$.  The argument of the previous paragraph shows that
\[ \dashint_{B_{2\mathfrak{r}(\ep)}(x)} -w_\ep \Delta w_\ep \zeta^2 dy \leq C\mathfrak{r}(\ep)^2\]
on the other hand we can compute
\[ \dashint_{B_{2\mathfrak{r}(\ep)}(x)} -w_\ep \Delta w_\ep \zeta^2 dy = \dashint_{B_{2\mathfrak{r}(\ep)}(x)}|\grad w_\ep|^2 \zeta^2 dy + \dashint_{B_{2\mathfrak{r}(\ep)}(x)}\zeta\grad w_\ep \cdot 2w_\ep \grad \zeta dy\]
by Young's inequality
\[ \dashint_{B_{2\mathfrak{r}(\ep)}(x)}\zeta\grad u_\ep \cdot 2w_\ep \grad \zeta dy \geq -\frac{1}{2}\dashint_{B_{2\mathfrak{r}(\ep)}(x)}|\grad w_\ep|^2 \zeta^2 dy- 2\dashint_{B_{2\mathfrak{r}(\ep)}(x)}w_\ep^2 |\grad \zeta|^2 dy\]
and
\[\dashint_{B_{2\mathfrak{r}(\ep)}(x)}w_\ep^2 |\grad \zeta|^2 dy \leq C\mathfrak{r}(\ep)^4\frac{1}{\mathfrak{r}(\ep)^2}. \]
Combining the previous inequalities leads to
\[\left(\dashint_{B_{\mathfrak{r}(\ep)}(x)}|\grad w_\ep|^2 dy\right)^{1/2} \leq C \mathfrak{r}(\ep).\]
Note that we have not proved a $\mathfrak{r}(\ep)$ rate for the gradient in $L^\infty$ even for the corrector problem, so this kind of averaged estimate is basically the best we can do without knowing something more about corrector problem.

\bibliographystyle{amsplain}
\bibliography{AbedinFeldman}

\end{document}